\titlespacing*{\section}{0pt}{14pt}{4pt}
\titlespacing*{\subsection}{0pt}{8pt}{3pt}
\patchcmd{\ttlh@hang}{\parindent\z@}{\parindent\z@\leavevmode}{}{}
\patchcmd{\ttlh@hang}{\noindent}{}{}{}
\def\maketimestamp{\count255=\time
\divide\count255 by 60\relax
\edef\thetime{\the\count255:}%
\multiply\count255 by-60\relax
\advance\count255 by\time
\edef\thetime{\thetime\ifnum\count255<10 0\fi\the\count255}
\edef\thedate{\number\day-\ifcase\month\or Jan\or Feb\or Mar\or
             Apr\or May\or Jun\or Jul\or Aug\or Sep\or Oct\or
             Nov\or Dec\fi-\number\year}
\def\timstamp{\hbox to\hsize{\tt\hfil\thedate\hfil\thetime\hfil}}}
\numberwithin{equation}{section}  
\newtheorem{theorem}{Theorem}[section]
\newtheorem{lemma}[theorem]{Lemma}
\theoremstyle{definition}
\newtheorem{definition}[theorem]{Definition} 
\theoremstyle{remark}
\newtheorem{remark}[theorem]{Remark}
\DeclareMathOperator{\exponential}{e}
\newcommandtwoopt{\mixedS}[2][\cG][\cH]{S_{{#1},{#2}}} 
\newcommandtwoopt{\gaborG}[3][\alpha][\beta]{\mathcal{G}(#3,#1,#2)} 
\newcommandtwoopt{\MD}[3][a][b]{\mathcal{MD}(#3,#1,#2)} 
\newcommand{\myexp}[1]{\exponential^{#1}}
\newcommand*{\numbersys}[1]{\ensuremath{\mathbb{#1}}}
\newcommand*{\C}{\numbersys{C}}
\newcommand*{\R}{\numbersys{R}}
\newcommand*{\Q}{\numbersys{Q}}
\newcommand*{\Z}{\numbersys{Z}}
\newcommand*{\cH}{\mathcal{H}}
\newcommand*{\cK}{\mathcal{K}}
\newcommand*{\cG}{\mathcal{G}}
\newcommand*{\cMD}{\mathcal{MD}}
\newcommand{\itvoo}[2]{\ensuremath{\left({#1},{#2}\right)}} %
\newcommand{\itvco}[2]{\ensuremath{\left[{#1},{#2}\right)}} %
\newcommand{\abs}[1]{\ensuremath{\left\lvert#1\right\rvert}}
\newcommand{\norm}[2][]{\ensuremath{\left\lVert#2\right\rVert_{#1}}}
\newcommand{\innerprod}[3][]{\ensuremath{\left\langle #2,#3\right\rangle_{\! #1}}}
\newcommand{\set}[1]{\ensuremath{\left\lbrace{#1}\right\rbrace}}
\newcommand{\setprop}[2]{\ensuremath{\left\lbrace{#1} : {#2}\right\rbrace}}
\newcommand*\oline[1]{%
  \vbox{%
    \hrule height 0.5pt
    \kern0.25ex
    \hbox{%
      \kern-0.1em
      \ifmmode#1\else\ensuremath{#1}\fi
      \kern-0.1em
    }
  }
}
\def\blfootnote{\xdef\@thefnmark{}\@footnotetext} 
\def\subjclass{\xdef\@thefnmark{}\@footnotetext}
\long\def\symbolfootnote[#1]#2{\begingroup%
\def\thefootnote{\fnsymbol{footnote}}\footnote[#1]{#2}\endgroup} 
  \renewenvironment{abstract}{%
      \titlepage
      \null\vfil
      \@beginparpenalty\@lowpenalty
      \begin{center}%
        \bfseries \abstractname
        \@endparpenalty\@M
      \end{center}}%
     {\par\vfil\null\endtitlepage}
  \renewenvironment{abstract}{%
      \if@twocolumn
        \section*{\abstractname}%
      \else
        \small
        \list{}{%
          \settowidth{\labelwidth}{\textbf{\abstractname:}}
          \setlength{\leftmargin}{50pt}
          \setlength{\rightmargin}{50pt}
          \setlength{\itemindent}{\labelwidth}
          \addtolength{\itemindent}{\labelsep}
        }
        \item[\textbf{\abstractname:}]

      \fi}
      {\if@twocolumn\else\endlist\fi}
\begin{document}

\title{A remark on dilation-and-modulation frames for $L^2(\R_+)$}

\date{\today}

 \author{Jakob Lemvig\footnote{Technical University of Denmark, Department of Applied Mathematics and Computer Science, Matematiktorvet 303B, 2800 Kgs.\ Lyngby, Denmark, E-mail: \protect\url{jakle@dtu.dk}}} 

 \blfootnote{2010 {\it Mathematics Subject Classification.} Primary
   42C15. Secondary: 42A60}
 \blfootnote{{\it Key words and phrases.} dilation, frame, Gabor
   system, modulation}

\maketitle

\thispagestyle{plain}
\begin{abstract} 
  We show that every rationally sampled dilation-and-modulation system
  is unitarily equivalent with a multi-window  Gabor system. 
 As
    a consequence, frame theoretical results from Gabor
  analysis can be directly transferred to dilation-and-modulation systems.  
\end{abstract}

\section{Introduction}
\label{sec:introduction}

In  a recent series of papers~\cite{MR4025543,MR3529179,MR3820190,1708.05941},
so-called modulation-and-dilation systems (defined below) have
been suggested as a way to analyze causal time signals in the space $L^2(\R_+)$ of square integrable functions
defined on the positive real half-line $\R_+$. We will show
by a non-standard ``change of variable''-trick that the theory of
dilation-and-modulation systems in $L^2(\R_+)$ is a 
variant of the well-studied time-frequency systems considered in Gabor
analysis in
$L^2(\R)$.   

Let $a,b > 1$. We say that a function $f$ on $\R$ or $\R_+$ is \emph{$b$-dilation
periodic} if $f(\cdot)=f(b\cdot)$ and define a sequence of $b$-dilation periodic functions
$\gamma_m:\R_+ \to \C$ by
\begin{equation}
  \label{eq:1}
  \gamma_m(x) = 
\myexp{2\pi i m x/(b-1)} \qquad x \in \itvco{1}{b} 
\end{equation}
for each $m \in\Z$. The functions $\gamma_m$ are quite different from the
usual complex exponentials $x\mapsto \myexp{2\pi i bm x}$ and their
dilates. E.g., while $\gamma_m$ \emph{is} continuous, it is only piecewise
$C^\infty$ on $\R_+$ with discontinuity points of the derivative $\gamma_m'$, $m \neq 0$, at the $b$-adic fractions
$b^k$, $k \in \Z$.   
 
The dilation-and-modulation system ($\cMD$-system) generated by
$h_\ell \in L^2(\R_+)$, $\ell=1,\dots,L$,
is a countable family of $L^2(\R_+)$ defined by:
\begin{equation}
  \label{eq:MD-def}
  \MD{\set{h_\ell}_{\ell=1}^L} = \set{a^{j/2} \gamma_m(\cdot)
    h_\ell(a^j \cdot)}_{j,m \in \Z,\ell \in \set{1,\dots,L}}.
\end{equation}
If there is only one generator, i.e., $L=1$, we write the $\cMD$-system as $\MD{h}$.
A $\cMD$-system is said to be rationally sampled whenever
$\log_b(a)\in \Q$.
The objective of this note is to show any $\cMD$-system with 
$\log_b(a)\in \Q$ is unitarily equivalent with a regular Gabor system of the form
$\setprop{\myexp{2 \pi i m x} g_r(\cdot-\alpha k)}{m,k \in \Z,r=1,\dots,R}$ in $L^2(\R)$
for some constant $\alpha >0$ and generators
$\{g_1,\dots,g_R\} \in L^2(\R)$ dependent on $a$, $b$, and $h\in
L^2(\R_+)$. As a direct consequence, most properties of
interest, e.g., any basis or frame theoretical property, of
$\MD{h}$ in $L^2(\R_+)$
can be analyzed by Gabor theory of
$\setprop{\myexp{2 \pi i m x} g_r(\cdot-ck)}{m,k \in \Z,r=1,\dots,R}$ in $L^2(\R)$.
Thus, while rationally oversampled systems $\MD{h}$ can be of
practical importance, they should be considered as a variant of Gabor systems. 

The analysis of dilation-and-modulation systems in the literature~\cite{MR4025543,MR3529179,MR3820190,1708.05941} has so far been
restricted to the case of rational sampling $\log_b(a)\in \Q$. Interestingly, 
the here presented link to Gabor analysis seemingly breaks down when $\log_b(a)\notin \Q$. 
\section{Preliminaries}
\label{sec:preliminaries-1}


For a bounded
function $\eta \in L^\infty(\R)$, the 
 multiplication operator $M_\eta$ on $L^2(\R)$ is defined as $M_\eta f = \eta f$ for
$f \in L^2(\R)$. For $\eta(x)=\myexp{2 \pi i b x}$ with $b\in \R$, the
modulation operator $M_\eta$ is by slight abuse of notation written
$M_b$. For $a>0$ and $c\in \R$, the dilation operator
$D_a: L^2(\R)\to L^2(\R)$ and translation operator
$T_c: L^2(\R)\to L^2(\R)$ are defined by:
\[ 
    D_af(x) = a^{1/2} f(a x) \quad \text{and} \quad T_c f(x)=f(x-c)
\] 
for $x \in \R$, respectively. The multiplication and dilation operator
on $L^2(\R_+)$ is defined similarly, while the translation operator
is undefined on $L^2(\R_+)$.  

In terms of these operators, the $\cMD$-system~\eqref{eq:MD-def} generated by $h \in
L^2(\R_+)$ can  be written as
\begin{equation}
  \label{eq:MD-def-operator}
  \MD{h} = \set{M_{\gamma_m} D_{a^j}h}_{j,m \in \Z}.
\end{equation}
For $\alpha,\beta>0$ the \emph{(multi-window) Gabor system} 
generated by the functions $g_1,\dots,g_L \in L^2(\R)$ is given by 
\begin{equation}
  \label{eq:Gabor-def-operator}
  \gaborG[\alpha][\beta]{\set{g_1,\dots,g_L}} := \set{M_{\beta m}
    T_{\alpha k} g_\ell }_{k,m \in \Z, \ell \in \set{1,\dots, L}}.
\end{equation}
The three cases $\alpha \beta < 1$, $\alpha \beta =1$ and $\alpha
\beta >1$ are usually called oversampling, critically sampling and
undersampling, respectively, as these cases determine if sampling
time-frequency shifts of $g \in L^2(\R)$ in phase space can lead to frame/complete
systems. If $\alpha\beta \in \Q$, the sampling is said to be rational.

The ``change of variables''-trick used to turn $\cMD$ systems into Gabor
systems relies on the following
function. Let $\varphi: \R \to \R_+$ be a piecewise linear function interpolating
the sampling points $(k,b^k)$ for $k \in \Z$. Explicitly, for each $k
\in \Z$, we have:
\[ 
   \varphi(x) = b^k\bigl((b-1)x+(1-(b-1)k)\bigr) \quad \text{for } x \in \itvco{k}{k+1}.
\]
The slope of $\varphi$ grows exponentially as $\varphi'(x)= b^k(b-1)$ for $x \in \itvoo{k}{k+1}$
for every $k \in \Z$. 
 We define the linear operator $D_\varphi: L^2(\R_+) \to L^2(\R)$  by
$D_\varphi h = \sqrt{\varphi'} \cdot (h \circ \varphi)$ for all $h
\in L^2(\R_+)$, that is,   
$(D_\varphi h)(x) =  \sqrt{\varphi'(x)}\cdot h(\varphi(x))$  for a.e.\ $x
\in \R$ .
By the change of variable formula,
the operator $D_\varphi$ is an isometry. As $\varphi$ is a bijection, the
operator $D_\varphi$ is a bijection and thus unitary. Note that if $\varphi$ is a linear,
i.e.,  $\varphi(x)=a\, x$ (defined either as a mapping $\R\to \R$ or
$\R_+\to \R_+$), then $D_\varphi$ is simply the dilation
operator $D_a$; the similarity of notation should not lead to
confusions.

\begin{lemma}
\label{lem:communt}
Let $a,b>1$ be given such that $\log_b(a) \in \Q$, and let
 $q/p=\log_b(a)$ with $p,q \in \Z_{>0}$ being relatively prime. 
Then, as operators from $L^2(\R_+)$ to $L^2(\R)$, we have the
commutator relations: 
\begin{align}
    \label{eq:commut_dila}
    D_\varphi D_{a^{sq}} &= T_{-sp} D_\varphi   
\\
    \label{eq:commut_modu}
    D_\varphi M_{\gamma_m} &= \myexp{2\pi i m/(b-1)} M_{m} D_\varphi  
\end{align}
for any $s,m \in \Z$.
\end{lemma}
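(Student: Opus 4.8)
The plan is to verify both commutator relations by direct computation, applying each operator to an arbitrary $h \in L^2(\R_+)$ and evaluating at a point $x \in \R$. The two key ingredients will be the explicit piecewise-linear formula for $\varphi$ together with the identities $\varphi(x+1) = b\,\varphi(x)$ and $\varphi'(x+1) = b\,\varphi'(x)$, which encode the exponential-to-additive structure of the change of variables, and the fact that $\log_b a = q/p$ rearranges to $a^{q} = b^{p}$, so that dilation by $a^{sq}$ corresponds to dilation by $b^{sp}$.

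For the dilation relation \eqref{eq:commut_dila}, I would start from $\bigl(D_\varphi D_{a^{sq}} h\bigr)(x) = \sqrt{\varphi'(x)}\,(D_{a^{sq}} h)(\varphi(x)) = \sqrt{\varphi'(x)}\,a^{sq/2}\,h\bigl(a^{sq}\varphi(x)\bigr)$. Here I would substitute $a^{sq} = b^{sp}$ and then use the scaling identity for $\varphi$: since $\varphi(x + sp) = b^{sp}\varphi(x)$, we have $b^{sp}\varphi(x) = \varphi(x+sp)$, and correspondingly $\varphi'(x + sp) = b^{sp}\varphi'(x)$, so $a^{sq/2}\sqrt{\varphi'(x)} = b^{sp/2}\sqrt{\varphi'(x)} = \sqrt{\varphi'(x+sp)}$. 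This collapses the expression to $\sqrt{\varphi'(x+sp)}\,h\bigl(\varphi(x+sp)\bigr) = (D_\varphi h)(x + sp) = \bigl(T_{-sp} D_\varphi h\bigr)(x)$, as desired. The only subtlety is tracking that the integer shift $sp$ lands exactly at the break-points of $\varphi$ so the piecewise formula shifts consistently; this is automatic because the break-points are the integers.

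For the modulation relation \eqref{eq:commut_modu}, I would compute $\bigl(D_\varphi M_{\gamma_m} h\bigr)(x) = \sqrt{\varphi'(x)}\,\gamma_m(\varphi(x))\,h(\varphi(x))$ and focus on evaluating $\gamma_m(\varphi(x))$. The $b$-dilation periodicity of $\gamma_m$ reduces this to the value of $\gamma_m$ on the fundamental domain $\itvco{1}{b}$: for $x \in \itvco{k}{k+1}$ we have $\varphi(x) \in \itvco{b^k}{b^{k+1}}$, so $b^{-k}\varphi(x) \in \itvco{1}{b}$ and $\gamma_m(\varphi(x)) = \myexp{2\pi i m\, b^{-k}\varphi(x)/(b-1)}$. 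Inserting the explicit formula $\varphi(x) = b^k((b-1)x + (1-(b-1)k))$ gives $b^{-k}\varphi(x) = (b-1)x + 1 - (b-1)k$, hence the exponent becomes $2\pi i m\bigl(x + \tfrac{1}{b-1} - k\bigr)$. Since $k \in \Z$ and $m \in \Z$, the term $\myexp{-2\pi i m k} = 1$, leaving $\gamma_m(\varphi(x)) = \myexp{2\pi i m/(b-1)}\,\myexp{2\pi i m x} = \myexp{2\pi i m/(b-1)}\,(M_m \text{-phase})$. Thus $\gamma_m(\varphi(x)) = \myexp{2\pi i m/(b-1)}\,\myexp{2\pi i m x}$, which is exactly the constant scalar times the modulation factor, yielding $\myexp{2\pi i m/(b-1)} M_m D_\varphi h$.

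I expect the main obstacle to be purely bookkeeping rather than conceptual: one must handle the $k$-dependence uniformly across all intervals $\itvco{k}{k+1}$ and confirm that the $k$-dependent terms cancel (the factor $\myexp{-2\pi i m k} = 1$) so that a single global identity holds on all of $\R$ rather than just one subinterval. The key realization that makes everything clean is that $\varphi$ was designed precisely so that composing $\gamma_m$ with $\varphi$ linearizes the exponent modulo an integer, converting the awkward $b$-dilation-periodic exponentials into genuine modulations $\myexp{2\pi i m x}$; once this is noticed, both relations follow by substituting the explicit formula and simplifying.
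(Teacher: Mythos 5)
Your proposal is correct and follows essentially the same route as the paper: a direct pointwise computation reducing \eqref{eq:commut_dila} to the scaling identity $\varphi(x+sp)=b^{sp}\varphi(x)=a^{sq}\varphi(x)$ (via $a^q=b^p$), and reducing \eqref{eq:commut_modu} to evaluating $\gamma_m(\varphi(x))$ on each interval $\itvco{k}{k+1}$ using $b$-dilation periodicity and $\myexp{-2\pi i mk}=1$. The only cosmetic difference is that you make the matching of the derivative prefactors $\sqrt{\varphi'}$ explicit, which the paper leaves implicit.
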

\begin{proof}
Let $h \in L^2(\R_+)$. We first prove \eqref{eq:commut_dila}.
By definition, for a.e.\ $x \in \R$, 
 \[ 
  (D_\varphi D_{a^j} h)(x) = a^{j/2} \varphi'(x)^{1/2}\, h(a^j \varphi(x))
  \quad\text{and}\quad 
  (T_{-\ell} D_\varphi h)(x) = \varphi'(x+\ell)^{1/2}\, h(\varphi(x+\ell)) ,
 \]
for any $j,\ell \in \R$.
Hence, setting $j=sq$ and $\ell=sp$, it suffices to  show:
\begin{equation}
a^{sq} \varphi(x) = \varphi(x+sp)\label{eq:varphi-prop1}
\end{equation}
 for $x \in \R$ and $s \in \Z$. 

 Let $s\in \Z$ be given. For each $k\in \Z$ and any $x \in \itvco{k}{k+1}$, we have 
  \begin{align*}
    a^{sq} \varphi(x) &= a^{sq} b^k\bigl((b-1)x+(1-(b-1)k)\bigr)
\intertext{and}
    \varphi(x+sp) &= b^{k+sp}\bigl((b-1)(x+sp)+(1-(b-1)(k+sp))\bigr) \\
                 &= b^{sp} b^k \bigl((b-1)x+(1-(b-1)k)\bigr). 
  \end{align*}
The proof of 
\eqref{eq:varphi-prop1} is complete if $a^{sq} = b^{sp}$ for $s \in
\Z$, but this follows directly from $q \log_b(a) = p$, i.e., $a^{q} = b^{p}$.

To see \eqref{eq:commut_modu}, we compute, for $m\in \Z$ and $x \in
\itvco{k}{k+1}\, (k\in \Z)$, 
\begin{align*}
  \gamma_m(\varphi(x)) &= \gamma_m(\varphi(x)/b^k) \\ &= 
 \myexp{2\pi i m
  [(b-1)x+(1-(b-1)k)]/(b-1)} \\
 &=  \myexp{2\pi i m (x-k+1/(b-1))} \\
 &=  \myexp{2\pi i m/(b-1)}\myexp{2\pi i m x}, 
\end{align*}
where we used the $b$-dilation periodicity of $\gamma_m$ and that
$\myexp{-2\pi i mk}=1$ for $k,m \in \Z$.
This, in turn, proves \eqref{eq:commut_modu}.
\end{proof}

There is a slightly delicate dependence on the dilation parameter in
\eqref{eq:commut_dila}. Indeed, commutator relations of the form
$D_\varphi D_{a^{s}} = T_{m} D_\varphi$, $s,m \in \R$, are only true
for the values of $s$ and $m$ described in Lemma~\ref{lem:communt}. 

\section{The equivalence of $\cMD$-systems and Gabor systems}
\label{sec:equivalence}
A \emph{frame} in a Hilbert space $\cH$ is a countable family of vectors
$\set{f_k}_{k \in I} \subset \cH$ for which there exist constants
$A,B>0$, called \emph{frame bounds}, so that
\[ 
  A \norm{f}^2 \le \sum_{k \in I} \abs{\innerprod{f}{f_k}}^2 \le B
  \norm{f}^2 \qquad \text{for all } f \in \cH.
\]
The largest such constant $A$ and smallest such constant $B$ are called
\emph{optimal frame bounds}.

\begin{definition}
\label{def:unitary-equi}
  Two countable families $\set{f_k}_{k \in I}$ and
  $\set{g_k}_{k \in I}$ in Hilbert spaces $\cH$ and $\cK$,
  respectively, are said to be \emph{unitarily equivalent} if there
  exists a unitary operator $U: \cH \to \cK$ and unimodular constants
  $c_k$ such that $g_k = c_k\, Uf_k$ for all $k \in I$.
\end{definition}
Basis and frame theoretical properties are
preserved by unitarily
equivalence, e.g., if $\set{f_k}_{k \in I}$ and $\set{g_k}_{k \in I}$
are unitarily equivalent, then $\set{f_k}_{k \in I}$ is a frame (or
Riesz basis)
precisely when $\set{g_k}_{k \in I}$ is a frame (or a Riesz basis), and the optimal
frame bounds are the identical. Hence, from a frame theoretical
point-of-view two unitarily equivalent systems are identical objects. 

We write $q/p=\log_b(a) \in \Q$ with $p,q \in \Z_{>0}$ relative prime.
For any $h \in L^2(\R_+)$, it follows by Lemma~\ref{lem:communt}  
that 
\begin{align}
    \label{eq:commut-MD-Gabor}
    D_\varphi M_{\gamma_m} D_{a^j} h = D_\varphi M_{\gamma_m}
  D_{a^{sq}} D_{a^{r}}
  h = \myexp{2\pi i m/(b-1)} M_{m}
                                     T_{-sp} D_\varphi D_{a^r} h
\end{align}
for any $j,m,s \in \Z$, where $j=sq+r$ with remainder $r\in \{0,1,
\dots, q-1\}$. From \eqref{eq:commut-MD-Gabor} it is 
straightforward to prove the following equivalence.

\begin{theorem}
\label{thm:frame-equi}
Let $a,b>1$ be given such that $\log_b(a) \in \Q$, and let
 $q/p=\log_b(a)$ with $p,q \in \Z_{>0}$ being relatively prime. Suppose $h \in L^2(\R_+)$. Then the following systems are unitarily equivalent:
\begin{enumerate}[(i)]
\item The dilation-and-modulation system $\MD{h}$ in $L^2(\R_+)$ \label{item:1}
\item The (multiwindow) Gabor system $\gaborG{\{g_1,\dots, g_q\}}$ in $L^2(\R)$,
  where $\alpha=p, \beta=1$ and $g_r = D_\varphi D_{a^{r}} h$ for
  $r=0,1,\dots, q-1$, \label{item:2}
\end{enumerate}
\end{theorem}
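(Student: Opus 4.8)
The plan is to use the unitary operator $U = D_\varphi\colon L^2(\R_+)\to L^2(\R)$ from Section~\ref{sec:preliminaries-1} as the operator witnessing the equivalence, and to read off the required identity directly from~\eqref{eq:commut-MD-Gabor}. The only real bookkeeping is to set up a bijection between the index set $\Z^2$ of $\MD{h}$ and the index set $\set{(k,m,r) : k,m\in\Z,\ r\in\set{0,\dots,q-1}}$ of the multiwindow Gabor system $\gaborG{\set{g_1,\dots,g_q}}$, so that corresponding elements are carried onto one another, up to a unimodular scalar, by $D_\varphi$.

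First I would fix the reindexing. Given $(j,m)\in\Z^2$, Euclidean division of $j$ by $q$ gives the unique representation $j = sq + r$ with $s\in\Z$ and $r\in\set{0,1,\dots,q-1}$; I send $(j,m)$ to the Gabor triple $(k,m,r)$ with $k=-s$. This is a bijection, with inverse $(k,m,r)\mapsto(j,m)=(-kq+r,\,m)$. Under it the generator $g_r = D_\varphi D_{a^r} h$ is precisely the one appearing on the right-hand side of~\eqref{eq:commut-MD-Gabor}.

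Then I would apply~\eqref{eq:commut-MD-Gabor} with $j=sq+r$ and $s=-k$ to obtain
\begin{equation*}
  D_\varphi \bigl( M_{\gamma_m} D_{a^j} h \bigr)
    = \myexp{2\pi i m/(b-1)}\, M_m\, T_{-sp}\, g_r
    = c_{(j,m)}\, M_m\, T_{pk}\, g_r,
\end{equation*}
where $c_{(j,m)} = \myexp{2\pi i m/(b-1)}$. The exponent is purely imaginary, so $\abs{c_{(j,m)}}=1$, giving the required unimodular constants; and as $(k,m,r)$ ranges over the Gabor index set, $M_m T_{pk} g_r$ ranges over every element of $\gaborG{\set{g_1,\dots,g_q}}$ (with $\beta=1$, $\alpha=p$). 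By Definition~\ref{def:unitary-equi} this is exactly the asserted unitary equivalence.

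I do not expect a substantive obstacle: the analytic content — that conjugation by $D_\varphi$ converts dilations and modulations into translations and modulations — is entirely contained in Lemma~\ref{lem:communt}, and what remains is the elementary division-with-remainder reindexing together with the observation that $\myexp{2\pi i m/(b-1)}$ has modulus one. The one point needing a moment's care is the sign in the translation step: the factor $T_{-sp}$ in~\eqref{eq:commut-MD-Gabor} must be matched with $T_{\alpha k}=T_{pk}$, which is what forces the convention $k=-s$ above.
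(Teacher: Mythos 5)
Your proposal is correct and follows essentially the same route as the paper: both invoke \eqref{eq:commut-MD-Gabor}, take $U=D_\varphi$, use the division $j=sq+r$ to reindex, and identify the unimodular constants $\myexp{2\pi i m/(b-1)}$. You merely spell out the index bijection (with $k=-s$) a little more explicitly than the paper does.
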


\begin{proof}
  The unitary equivalence follows immediately from \eqref{eq:commut-MD-Gabor}
  by, in Definition~\ref{def:unitary-equi}, identifying the unitary operator $U: \cH \to \cK$ as $D_\varphi: L^2(\R_+) \to
  L^2(\R)$, the vectors as $f_{(j,m)}=M_{\gamma_m} D_{a^j} h$ and
  $g_{(j,m)} = M_{m} T_{-sp} (D_\varphi D_{a^r} h)$,
   and the unimodular constants as $c_{(j,m)}=\myexp{2\pi i m/(b-1)}$
   for $(j,m)\in I:=\Z \times \Z$, where $j=sq+r$ with $r=0,\dots, q-1$.
\end{proof}

\begin{remark}
 It is straightforward to extend Theorem~\ref{thm:frame-equi} to 
 $\cMD$-systems with multiple generators
 $\MD{\set{h_\ell}_{\ell=1}^L}$. In this case, 
  the corresponding Gabor system
  simply becomes $\gaborG[p][1]{\{g_{1,1},\dots, g_{L,q}\}}$ with $Lq$ generators 
  given by $g_{\ell,r} = D_\varphi D_{a^{r}} h_\ell$ for
  $r=0,1,\dots, q-1$ and $\ell=1,\dots,L$. 
\end{remark}

The Gabor system  $\gaborG[p][1]{\{ g_1,\dots, g_q \}}$ in Theorem~\ref{thm:frame-equi}\eqref{item:2} is
integer \emph{undersampled} by a factor $p$, but
has $q$ generators:
\begin{equation}
g_1 = D_\varphi h, \quad
   g_2 = D_\varphi D_{a} h,\quad \cdots \quad g_{q} = D_\varphi
   D_{a^{q-1}} h. \label{eq:Gabor-gen-dilations}
 \end{equation}
In Gabor analysis this is a rather unusual form of the Gabor system, but
one should recall that even the standard rationally \emph{oversampled} Gabor system
$\gaborG[\alpha][\beta]{g}$, $\alpha \beta =p/q \in \Q$, can be
written as integer undersampled by a factor $p$ with $q$
generators since 
$\gaborG[\alpha][\beta]{g}$ is unitarily equivalent (by $D_\beta$) to:
\begin{equation*}
  \gaborG[p/q][1]{D_{1/\beta}g} = \set{M_{m}
    T_{p k/q} D_{1/\beta}g }_{k,m \in \Z} = \set{M_{m}
    T_{p k} \tilde{g}_r }_{k,m \in \Z, r\in \{0,1,\dots,q-1\}},
\end{equation*}
where
\begin{equation}
  \label{eq:Gabor-rat-oversampled-gen}
  \tilde{g}_r=T_r D_{1/\beta}g \quad \text{for $r\in \{0,1,\dots,q-1\}$}.
\end{equation}
For more information on Gabor analysis and, in particular, the results
mentioned in the discussion below, we refer to the standard text \cite{GroechenigFoundations2001}.

The theory of $\cMD$-systems developed in
\cite{MR4025543,MR3529179,MR3820190,1708.05941} is very
reminiscent of corresponding theory in Gabor
analysis, and Theorem~\ref{thm:frame-equi} provides a clear link between the two.  
Let us here restrict ourselves to a few examples of how
Theorem~\ref{thm:frame-equi} can be used to recover results from the theory of dilation-and-modulation systems. 

From the Density Theorem
in Gabor analysis together with Theorem~\ref{thm:frame-equi}, it
follows that $\log_b(a) \le 1$ is necessary for the frame
property, in fact, even for completeness, of rationally sampled dilation-and-modulation systems
$\MD{h}$. This recovers the density result of $\cMD$-systems proved in \cite{MR4025543}. 

The case $\log_b(a) = 1$, i.e., $a=b$, corresponds to
critically sampling of the dilation-modulation systems studied
in~\cite{MR3529179,1708.05941,MR3820190}. Indeed, since $a=b$
corresponds to $p=q=1$ and $\alpha=\beta=1$, the unitarily equivalent Gabor system
in Theorem~\ref{thm:frame-equi}(ii) becomes
\[
\gaborG[1][1]{D_\varphi h} = \set{M_m T_{s} (D_\varphi h)}_{m,s\in \Z}, 
\]
Hence, from well-known results in Gabor analysis on critically sampled
Gabor system, 
it follows that the frame property of
$\MD[a][a]{h}$ automatically implies that the $\cMD$-system is, in fact,
a Riesz basis; see \cite{1708.05941,MR3820190} for the direct proof in case of $\cMD$-systems.  

Theorem~\ref{thm:frame-equi} can also be used to obtain new results on
$\cMD$-systems. E.g., a Balian-Low type-theorem for $\cMD$-systems is
straightforward to formulate. From Theorem~\ref{thm:frame-equi}, it
follows directly from the classical Balian-Low theorem in Gabor
analysis that if $\MD[a][a]{h}$ is a frame (hence a Riesz basis) for $L^2(\R_+)$, then
the time-frequency uncertainty of $g:=D_\varphi h \in L^2(\R)$ is infinite, i.e.,
\[
  \biggl( \int_{-\infty}^\infty \abs{x-u}^2 \abs{g(x)}^2 dx \biggl)
  \cdot \biggl( \int_{-\infty}^\infty \abs{\omega-\eta}^2 \abs{\hat{g}(\omega)}^2 d\omega
  \biggl) = \infty, \quad \text{for any } u,\eta \in \R.
\]
The infinite uncertainty product of $D_\varphi h \in L^2(\R)$
translates into restrictions on
$h$ in $L^2(\R_+)$.

However, in the case of rational oversampling, i.e., $\Q \ni \log_b(a)<1$, not all
results from Gabor analysis can be directly transferred to
dilation-and-modulation systems. This holds, in particular, for
concrete examples of frames. E.g., it is, in general, not possible to
construct an $\cMD$-frame from an arbitrary rationally oversampled Gabor frame
for $L^2(\R)$, the reason being that the (multiple) generators of the
Gabor systems in Theorem~\ref{thm:frame-equi}(ii) are required to be
dilated versions of each other, compare~\eqref{eq:Gabor-gen-dilations}
to \eqref{eq:Gabor-rat-oversampled-gen},
which is a highly non-standard restriction in Gabor analysis. Finally, 
we remind the reader that the presented link by
Theorem~\ref{thm:frame-equi} seemingly breaks down whenever $\log_b(a)
\notin \Q$.




\end{document}